\newtheorem{theorem}{Theorem}
\newtheorem{corollary}[theorem]{Corollary}
\newtheorem{definition}[theorem]{Definition}
\newtheorem{lemma}[theorem]{Lemma}
\newtheorem{remark}[theorem]{Remark}
\newcommand{\BN}{{\mathbb{N}}}
\newcommand{\Cl}{\mathrm{Cl}}
\newcommand{\zz}{\mathbb{Z}[\frac{1}{2}]}
\theoremstyle{definition}
\long\def\@savemarbox#1#2{\global\setbox#1\vtop{\hsize\marginparwidth 
  \@parboxrestore\tiny\raggedright #2}}
\begin{document}
\author{Tsachik Gelander}
\address{Hebrew University}
\email{gelander@math.huji.ac.il}
\author{Gili Golan}
\address{Vanderbilt University}
\email{gili.golan@vanderbilt.edu}
\author{Kate Juschenko}
\address{Northwestern University}
\email{kate.juschenko@gmail.com}
\thanks{The work of the first author was partially supported by ISF-Moked grant 2095/15. The work of the second author was partially supported by a Fulbright grant and a postdoctoral scholarship from Bar Ilan University. The work of the third author was partially supported by NSF CAREER grant DMS 1352173.}

\title{Invariable generation of Thompson groups}
\begin{abstract}
A subset $S$ of a group $G$ {\it invariably generates} $G$ if $G= \langle s^{g(s)} | s \in S\rangle$ for every choice of $g(s) \in G,s \in S$. We say that a group $G$ is {\it invariably generated} if such $S$ exists, or equivalently if $S=G$ invariably generates $G$. In this paper, we study invariable generation of Thompson groups. We show that Thompson group $F$ is invariable generated by a finite set, whereas Thompson groups $T$ and $V$ are not invariable generated.
\end{abstract}
\keywords{Invariable generation, Thompson groups}

\maketitle

\section{Introduction} 
Recall that a subset $S$ of a group $G$ {\it invariably generates} $G$ if $G= \langle s^{g(s)} | s \in S\rangle$ for every choice of $g(s) \in G,s \in S$. One says that a group  $G$ is {\it invariably generated}, or shortly IG, if such $S$ exists, or equivalently if $S=G$ invariably generates $G$. 
The term ``invariable generation'' was coined by Dixon \cite{Dixon} in his study of generation of Galois groups, where elements are given only up to conjugacy. Invariably generated groups were studied before by Wiegold \cite{W76,W77} under different terminology: A group $G$ is invariably generated if and only if no proper subgroup of $G$ meets every conjugacy class. This is equivalent to saying that every transitive permutation representation of $G$ on a non-singleton set admits a fixed-point-free element. Following \cite{KLS}, we say that $G$ is {\it finitely invariably generated}, or shortly FIG, if there is a finite subset $S\subset G$ which invariably generates $G$. 

A simple counting argument shows that every finite group is IG. Obviously, abelian groups are IG. More generally, J. Wiegold \cite{W76} showed that the class of IG groups is closed under extensions, hence contains all virtually solvable groups. Clearly, this class is also closed to quotients.
On the other hand, the class of IG groups is not closed under direct unions; for instance the (locally finite) group of finitely supported permutations of $\BN$ is clearly not IG, since every element fixes some point in $\BN$. Moreover, Wiegold \cite{W77} gave an example of an IG group whose commutator subgroup is not IG, proving in particular that the class IG is not subgroup closed.

In \cite{W76}, Wiegold proved that the free group 
$F=\langle a,b\rangle$ is not IG by producing a list $L$ of conjugacy class representatives which are jointly independent (i.e., that freely generate a free group of infinite rank). To recall his construction, let $\{ w_n\}$ be conjugacy class representatives which start and end with a non-zero power of $b$, then take $L=\{w_n^{a^n}:n\in\BN\}$. T. Gelander proved in \cite{convergence} that convergence groups, and in particular Gromov hyperbolic groups and relatively hyperbolic groups, are not IG, confirming a conjecture from \cite{KLS}. Gelander and Meiri \cite{Chen} established various examples of arithmetic groups possessing the Congruence Subgroup Property which are not IG, providing a negative answer to a question from \cite{KLS}. 

The notion of finite invariant generation is more subtle. For example it is still unknown weather every finitely generated solvable group is FIG.  Kantor, Lubotzky and Shalev \cite{KLS} proved that a finitely generated linear group is finitely invariably generated if and only if it is virtually solvable.

The main result of the paper is summarized in the following theorem:

\begin{theorem}
 Thompson's group $F$ is finitely invariably generated. The Thompson groups $T$ and $V$ are not invariably generated. 
\end{theorem}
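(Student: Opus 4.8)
The plan is to treat the two assertions separately, using throughout Wiegold's criterion recalled in the introduction: a group $G$ fails to be invariably generated precisely when some proper subgroup $H<G$ meets every conjugacy class (equivalently, $G$ admits a transitive action on a set with more than one point in which every element has a fixed point). For the positive statement I will exploit the abelianization $F\to\BZ^2$, $f\mapsto(\log_2 f'(0^+),\log_2 f'(1^-))$, whose kernel $[F,F]$ is exactly the group of elements supported in a compact subset of $(0,1)$ and is known to be simple.

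For $F$ finitely invariably generated, first I would choose a finite set $S\subset F$ whose image generates $\BZ^2$ and, crucially, each of whose elements has nontrivial germ at both endpoints $0$ and $1$. Since conjugation preserves the endpoint germs, for any choice of conjugates the subgroup $H=\langle s^{g(s)}\mid s\in S\rangle$ still surjects onto $\BZ^2$; moreover the full-support condition prevents an adversary from conjugating the generators so as to have pairwise disjoint supports (which would make them commute and generate a proper abelian subgroup). The next step is to extract from two such conjugates a nontrivial element $w\in H\cap[F,F]$, and then to use the north--south dynamics of a conjugate $u\in H$ with attracting fixed point at $0$ (slope $<1$ there) and of a conjugate $v\in H$ attracting at $1$: conjugating $w$ by powers of $u$ and $v$ marches its support toward either endpoint, and taking products yields elements of $H\cap[F,F]$ supported in a cofinal family of dyadic intervals throughout $(0,1)$. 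Since $[F,F]$ is generated by such locally supported elements and is simple, I expect these to force $[F,F]\le H$, whence $H=F$. The main obstacle is precisely this bootstrapping step: one must guarantee, uniformly over all adversarial conjugators $g(s)$, both that a nontrivial commutator-subgroup element actually appears in $H$ and that the available endpoint dynamics suffice to recover all of $[F,F]$; arranging the germ and dynamical conditions on $S$ to be strong enough to survive arbitrary conjugation is the heart of the argument.

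For $V$ I would establish failure of invariable generation via the subgroup criterion applied to clopen sets. Realize $V$ as homeomorphisms of the Cantor set $X$. The key lemma is that every element $v\in V$ stabilizes some proper nonempty clopen subset of $X$: a tree-pair diagram for $v$ produces a periodic point, and a union of small, pairwise disjoint clopen neighborhoods of a periodic orbit that are cyclically permuted by $v$ is a proper nonempty $v$-invariant clopen set. Since $V$ acts transitively on the proper nonempty clopen subsets of $X$ (any such set and its complement are Cantor sets, and $V$ realizes the required prefix-exchange homeomorphisms), every $v$ is conjugate into $H=\mathrm{Stab}_V(A_0)$ for a fixed proper clopen $A_0$. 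As $H$ is then a proper subgroup meeting every conjugacy class, $V$ is not invariably generated.

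For $T$ the same strategy is the natural one, but here lies the principal difficulty. Viewing $T$ as cyclic-order-preserving homeomorphisms of the circle $\BR/\BZ$ (equivalently of the ends $X$ with its cyclic order), each element has rational rotation number $\rho\in\BZ[\tfrac12]/\BZ$ and hence a periodic orbit, again yielding an invariant proper clopen set. However $\rho$ is a conjugacy invariant taking every dyadic value, and $T$ preserves the number of ``gaps'' of a clopen set, so neither a point-stabilizer nor a single clopen-set-stabilizer can meet the classes of all rotations; this is why the clean $V$ argument does not transfer directly. The plan is therefore to build a proper subgroup adapted to the periodic-orbit dynamics that nonetheless accommodates all rotation numbers --- for instance by conjugating the entire periodic configuration of each element into a fixed self-similar reference structure, or by producing a transitive $T$-action in which even the rotations have fixed points --- and to verify that it meets every conjugacy class. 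Overcoming this rotation-number obstruction so as to exhibit a single proper subgroup meeting every class is, I expect, the main obstacle in the second half of the theorem.
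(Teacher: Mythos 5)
Your proposal has genuine gaps in both halves. For the positive half ($F$ is FIG), the argument is a plan whose central step fails as stated, as you yourself acknowledge. Producing a nontrivial element of $H\cap[F,F]$ together with elements of $H\cap[F,F]$ supported in a cofinal family of dyadic intervals does not force $[F,F]\le H$: simplicity of $[F,F]$ only says that \emph{normal closures} of nontrivial elements are everything, and there is no reason $H$ should contain these normal closures; a subgroup meeting every dyadic interval with locally supported elements can still be proper. This is exactly why the paper invokes Golan's generation criterion (Theorem \ref{gen}): $H=F$ if and only if $\Cl(H)=F$, $H[F,F]=F$, \emph{and} $H$ contains an element fixing an interior dyadic point with one-sided slopes $1$ and $2$. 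Your conditions (image generating $\mathbb{Z}^2$, nontrivial germs at both endpoints) only address the second condition; since the criterion is an equivalence with three clauses, the first two alone do not suffice, and your germ conditions cannot produce clause (3), because conjugation moves interior fixed points while your hypotheses see only the endpoint germs. The paper instead puts $x_1$ into the invariable generating set precisely because $x_1$ fixes $\tfrac12$ with slopes $1$ and $2$ (a conjugation-invariant configuration up to relocating the fixed point), and verifies $\Cl(\langle x_0,(x_0x_1)^g\rangle)=F$ by explicit branch computations (Lemmas \ref{conj} and \ref{prop}); nothing of comparable strength appears in your sketch.

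For the negative half, your key lemma for $V$ is false: not every element of $V$ preserves a proper nonempty clopen set. Take $x_0\in F\subset V$, acting on the Cantor set: its only periodic points are the fixed points $0^{\infty}$ (repelling) and $1^{\infty}$ (attracting), and every other orbit runs from one to the other. A nonempty invariant clopen set, being open, contains non-fixed points, and then, being closed and invariant, contains both fixed points; the same applies to its complement, so no proper nonempty invariant clopen set exists. The error in your construction is that small clopen neighborhoods of a periodic orbit are cyclically permuted only when the return map $v^m$ fixes a neighborhood \emph{setwise}; at a hyperbolic (attracting or repelling) periodic point the return map nests neighborhoods strictly. Hence $\mathrm{Stab}_V(A_0)$ does not meet every conjugacy class and that route collapses. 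For $T$ you prove nothing at all: you correctly identify the rotation-number obstruction and then defer ``the main obstacle.'' Note that the paper sidesteps the fixed-subgroup strategy entirely: for each nontrivial conjugacy class it picks a representative $\gamma_n$ such that the complement of a prescribed interval $I_n$ is (weakly) $\gamma_n$-wandering (Lemmas \ref{lem1}, \ref{lem2}, Corollaries \ref{cor1}, \ref{cor2}); for $T$ a ping-pong argument makes $\langle \gamma_n : n\in\mathbb{N}\rangle$ a free product, hence a proper subgroup, while for $V$ --- where periodic elements only admit \emph{weakly} wandering sets and ping-pong fails --- an induction on word length shows the orbit of $0$ under $\langle \gamma_n : n\in\mathbb{N}\rangle$ stays inside $[0,\tfrac14)$, so the subgroup is proper by transitivity of $V$ on dyadic points. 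Adopting some per-class choice of representatives of this kind, rather than a single stabilizer subgroup chosen in advance, is what your approach is missing.
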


\vskip .2cm

\section{Thompson group F}\label{s:FT}

\subsection{F as a group of homeomorphisms}

Recall that Thompson group $F$ is the group of all piecewise linear homeomorphisms of the interval $[0,1]$ with finitely many breakpoints where all breakpoints are finite dyadic and all slopes are integer powers of $2$.  
The group $F$ is generated by two functions $x_0$ and $x_1$ defined as follows \cite{CFP}.
	
	\[
   x_0(t) =
  \begin{cases}
   2t &  \hbox{ if }  0\le t\le \frac{1}{4} \\
   t+\frac14       & \hbox{ if } \frac14\le t\le \frac12 \\
   \frac{t}{2}+\frac12       & \hbox{ if } \frac12\le t\le 1
  \end{cases} 	\qquad	
   x_1(t) =
  \begin{cases}
   t &  \hbox{ if } 0\le t\le \frac12 \\
   2t-\frac12       & \hbox{ if } \frac12\le t\le \frac{5}{8} \\
   t+\frac18       & \hbox{ if } \frac{5}{8}\le t\le \frac34 \\
   \frac{t}{2}+\frac12       & \hbox{ if } \frac34\le t\le 1 	
  \end{cases}
\]

The composition in $F$ is from left to right.

Every element of $F$ is completely determined by how it acts on the set $\zz$. Every number in $(0,1)$ can be described as $.s$ where $s$ is an infinite word in $\{0,1\}$. For each element $g\in F$ there exists a finite collection of pairs of (finite) words $(u_i,v_i)$ in the alphabet $\{0,1\}$ such that every infinite word in $\{0,1\}$ starts with exactly one of the $u_i$'s. The action of $F$ on a number $.s$ is the following: if $s$ starts with $u_i$, we replace $u_i$ by $v_i$. For example, $x_0$ and $x_1$  are the following functions:

\[
   x_0(t) =
  \begin{cases}
   .0\alpha &  \hbox{ if }  t=.00\alpha \\
    .10\alpha       & \hbox{ if } t=.01\alpha\\
   .11\alpha       & \hbox{ if } t=.1\alpha\
  \end{cases} 	\qquad	
   x_1(t) =
  \begin{cases}
   .0\alpha &  \hbox{ if } t=.0\alpha\\
   .10\alpha  &   \hbox{ if } t=.100\alpha\\
   .110\alpha            &  \hbox{ if } t=.101\alpha\\
   .111\alpha                      & \hbox{ if } t=.11\alpha\
  \end{cases}
\]
where $\alpha$ is any infinite binary word.

\subsection{Elements of F as pairs of binary trees} \label{sec:red}

Often, it is more convenient to describe elements of $F$ using pairs of finite binary trees (see \cite{CFP} for a detailed exposition). The considered binary trees are rooted \emph{full} binary trees; that is, every inner vertex (i.e., non-leaf vertex) has two outgoing edges: a left edge and a right edge. A  \emph{branch} in a binary tree is a simple path from the root to a leaf. If every left edge in the tree is labeled ``0'' and every right edge is labeled ``1'', then a branch in $T$ has a natural binary label. We rarely distinguish between a branch and its label. 

Let $(T_+,T_-)$ be a pair of finite binary trees with the same number of leaves. $(T_+,T_-)$ is called a \emph{tree-diagram}. Let $u_1,\dots,u_n$ be the (labels of) branches in $T_+$, listed from left to right. Let $v_1,\dots,v_n$ be the (labels of) branches in $T_-$, listed from left to right. We say that the tree-diagram $(T_+,T_-)$ has the \emph{pair of branches} $u_i\rightarrow v_i$ for $i=1,\dots,n$. The tree-diagram $(T_+,T_-)$ \emph{represents} the function $g\in F$ which takes binary fraction $.u_i\alpha$ to $.v_i\alpha$ for every $i$ and every infinite binary word $\alpha$. We also say that the element $g$ takes the branch $u_i$ to the branch $v_i$.
For a finite binary word $u$, we denote by $[u]$ (resp. $(u]$) the dyadic interval $[.u,.u1^{\mathbb{N}}]$ (resp. $(.u,.u1^{\mathbb{N}}]$). If $u\rightarrow v$ is a pair of branches of $(T_+,T_-)$, then $g$ maps the interval $[u]$ linearly onto $[v]$. 

A \emph{caret} is a binary tree composed of a root with two children. If $(T_+,T_-)$ is a tree-diagram and one attaches a caret to the $i^{th}$ leaf of $T_+$ and the $i^{th}$ leaf of $T_-$ then the resulting tree diagram is \emph{equivalent} to $(T_+,T_-)$ and represents the same function in $F$. When we say that a function $f$ has a pair of branches $u_i\rightarrow v_i$, the meaning is that some tree-diagram representing $f$ has this pair of branches. 
 Clearly, if $u\rightarrow v$ is a pair of branches of $f$, then for any finite binary word $w$, $uw\rightarrow vw$ is also a pair of branches of $f$. Similarly, if $f$ has the pair of branches $u\rightarrow v$ and $g$ has the pair of branches $v\rightarrow w$ then $fg$ has the pair of branches $u\rightarrow w$.

\subsection{Generating sets of F}

Let $H\le F$. Following \cite{GS,G16}, we define the \emph{closure} of $H$, denote $\Cl(H)$, to be the subgroup of $F$ of all piecewise-$H$ functions. In other words, $\Cl(H)$ is the topological full group (see, for example, \cite{EM}) of the group $H$ acting on the set of finite dyadic fractions in the unit interval $[0,1]$ with the natural topology. In \cite{G16}, the first author proved that the generation problem in $F$ is decidable. That is, there is an algorithm that decides given a finite subset $X$ of $F$ whether it generates the whole $F$. 

\begin{theorem}\cite[Theorem 7.14]{G16}\label{gen}
Let $H$ be a subgroup of $F$. Then $H=F$ if and only if the following conditions are satisfied. 
\begin{enumerate}
\item[(1)] $\Cl(H)=F$. 
\item[(2)] $H[F,F]=F$.
\item[(3)] There is an element $h\in H$ which fixes a finite dyadic fraction $\alpha\in (0,1)$ such that $h'(\alpha^-)=1$ and 
$h'(\alpha^+)=2$.
\end{enumerate}
\end{theorem}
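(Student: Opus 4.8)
The plan is to handle the two directions separately, treating the forward implication as routine verification and concentrating on the converse. For the forward direction, assuming $H=F$, I would check each condition directly: $\Cl(F)=F$ because any $F$-piecewise homeomorphism of $[0,1]$ is again piecewise linear with dyadic breakpoints and power-of-$2$ slopes, hence already lies in $F$, giving (1); condition (2) is immediate; and for (3) I would exhibit an explicit element, namely $h=x_1$ with $\alpha=\tfrac12$, since $x_1$ is the identity on $[0,\tfrac12]$ and equals $2t-\tfrac12$ on $[\tfrac12,\tfrac58]$, so that $x_1(\tfrac12)=\tfrac12$, $x_1'(\tfrac12^-)=1$ and $x_1'(\tfrac12^+)=2$.

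For the converse I would first reduce everything to a statement about the commutator subgroup. The map $\pi(g)=\bigl(\log_2 g'(0^+),\,\log_2 g'(1^-)\bigr)$ is a surjective homomorphism $F\to\BZ^2$ whose kernel is exactly $[F,F]$, and by the classical description $[F,F]$ consists of the elements that are the identity in a neighborhood of each of $0$ and $1$; equivalently $[F,F]=\bigcup_I F_I$, the directed union over closed dyadic intervals $I\subset(0,1)$ of the subgroups $F_I=\{g\in F:\supp(g)\subseteq I\}$, each isomorphic to $F$. Condition (2) says precisely that $\pi(H)=\BZ^2$, i.e. $H[F,F]=F$. Thus, once I know $[F,F]\le H$, condition (2) forces $H/[F,F]=F/[F,F]$ and hence $H=F$. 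The entire problem therefore becomes: deduce $[F,F]\le H$, i.e. $F_I\le H$ for every dyadic $I\subset(0,1)$, from conditions (1) and (3).

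The engine would be condition (1), which I expect to force strong transitivity of $H$ itself. The key observation is that every element of $\Cl(H)$ preserves the $H$-orbits on the dyadic rationals: if $f$ agrees on a dyadic interval with some $h\in H$, then $f(x)=h(x)$ lies in the $H$-orbit of $x$ for each dyadic $x$ there. Since $F$ is transitive on the dyadic rationals of $(0,1)$, the hypothesis $\Cl(H)=F$ forces $H$ to be transitive on them too, and restricting a suitable $f\in F=\Cl(H)$ to a subinterval on which it coincides with an element of $H$ upgrades this to the statement that $H$ carries a subinterval of any dyadic interval affinely onto a subinterval of any other. I would then bring in condition (3) to supply the germ data that $\Cl(H)$ cannot see: conjugating the element $h$ of (3) by elements of $H$ (available by the transitivity just established) transports its $(1,2)$-germ to the endpoints of a chosen interval $I$, and suitable commutators of such conjugates produce nontrivial elements of $H$ supported in $I$ with slope $2$ at an endpoint. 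Combining these germ elements with the interior transitivity coming from $\Cl(H)=F$, I would argue that every element of $F_I$ — each of which is $H$-piecewise because $F_I\subseteq F=\Cl(H)$ — can be realized by an honest product in $H$, so that $F_I\le H$; transitivity then moves $F_I$ to every $F_J$, yielding $[F,F]\le H$.

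The hard part will be exactly this last upgrade, from ``$H$-piecewise'' to ``genuinely an element of $H$'', for elements supported on $I$. The condition $\Cl(H)=F$ records the behaviour of $H$ only up to subdivision and is blind to the germ of an element at a dyadic point, and indeed one expects examples with $\Cl(H)=F$ and $\pi(H)=\BZ^2$ yet $H\neq F$; this is what makes condition (3) genuinely necessary rather than cosmetic. Condition (3) is the precise extra input pinning down the germ at the points where one glues $H$-pieces across the endpoints of $I$, allowing one to correct the slopes introduced by the patching. Controlling this germ bookkeeping — rather than any single algebraic identity — is where I expect the real work of the proof to lie.
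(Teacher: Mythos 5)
Your proposal cannot be checked against an in-paper argument, because the paper does not prove this statement at all: Theorem \ref{gen} is quoted verbatim from Golan \cite[Theorem 7.14]{G16}, and the present paper only uses it as a black box (via Lemma \ref{suffice} and Theorem \ref{thm:F}). So the comparison has to be with the cited source. Your forward direction is fine and matches the obvious verification (the witness $h=x_1$, $\alpha=\tfrac12$ is exactly the one this paper itself exploits in the proof of Theorem \ref{thm:F}). Your reduction of the converse to showing $[F,F]\le H$, the identification of $[F,F]$ with the directed union of the $F_I$, and the observation that elements of $\Cl(H)$ preserve the $H$-orbits on dyadic rationals (so that $\Cl(H)=F$ forces $H$ to be transitive on dyadics in $(0,1)$) are all correct and do agree with the broad shape of Golan's argument.

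The genuine gap is the step you explicitly defer: passing from ``$f\in F_I$ is piecewise-$H$ and $H$ contains suitably transported $(1,2)$-germ elements'' to ``$f\in H$.'' That upgrade \emph{is} the theorem; everything before it is soft. Your sketch offers no mechanism for it beyond the phrase ``germ bookkeeping,'' and there is no reason to believe a local patching argument closes it: an element of $\Cl(H)$ is assembled from restrictions of \emph{different} elements of $H$, and correcting the mismatches at the gluing points requires producing, inside $H$, elements with prescribed behavior on prescribed dyadic intervals --- which is again membership in $H$ of piecewise-$H$ data, so the naive approach is circular. In \cite{G16} this step is not a local computation at all; it goes through the Stallings-type core machinery for subgroups of diagram groups developed by Golan and Sapir, an analysis of the orbits and the core associated to $H$, and an induction manufacturing a generating set of $[F,F]$ inside $H$ from conditions (1) and (3). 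Your own (correct) remark that there exist subgroups with $\Cl(H)=F$ and $H[F,F]=F$ but $H\neq F$ shows the missing implication is delicate and cannot be routine. As it stands, the converse direction of your proposal is a plausible plan with the central lemma unproved, not a proof.
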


Below we would apply Theorem \ref{gen} to prove that a given subset of $F$ is a generating set of $F$. To verify that Condition (1) in the theorem holds, we will make use of the following lemma. The lemma is an immediate result of Remark 7.2 and Lemma 10.6 in \cite{G16}.

\begin{lemma}\label{suffice}
Let $H$ be a subgroup of $F$. If for each of the following pairs of branches there is an element in $H$ which has the given pair of branches, then $\Cl(H)=F$. 
\begin{enumerate}
\item[(1)] $00\rightarrow 0$
\item[(2)] $1\rightarrow 11$
\item[(3)] $01\rightarrow 10$
\item[(4)] $010\rightarrow 10$. 
\item[(5)] $011\rightarrow 10$. 
\end{enumerate}
\end{lemma}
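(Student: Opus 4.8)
The plan is to reduce the statement to a purely combinatorial fact about which branch pairs $H$ can realize, and then verify that fact by a short induction. Recall from Section~\ref{sec:red} that if $H$ realizes the branch pair $u\rightarrow v$ (meaning some element of $H$ has it) then $H$ also realizes $v\rightarrow u$ (inverse), $uw\rightarrow vw$ for any word $w$ (right-extension), and, whenever $H$ also realizes $v\rightarrow w$, the pair $u\rightarrow w$ (composition). Thus the relation ``$H$ realizes $u\rightarrow v$'' is an equivalence relation on finite binary words that is stable under appending a common suffix; this is essentially the content of Remark~7.2 of \cite{G16}. The key observation making the reduction work is that $\Cl(H)=F$ as soon as $H$ realizes \emph{every} branch pair that occurs in some element of $F$: given $f\in F$ with tree-diagram branch pairs $u_i\rightarrow v_i$, choosing $h_i\in H$ realizing $u_i\rightarrow v_i$ and gluing the $h_i$ over the dyadic intervals $[u_i]$ reproduces $f$ as a piecewise-$H$ map.

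First I would record which pairs can occur in $F$ at all. Since every $g\in F$ fixes the endpoints $0$ and $1$, a branch $u$ touching $0$ (that is, $u=0^k$) must go to a branch touching $0$, and likewise for $1$. Hence any branch pair $u\rightarrow v$ of an element of $F$ satisfies: $u$ is a power of $0$ iff $v$ is, and $u$ is a power of $1$ iff $v$ is. This splits the finite words into three families -- the all-zero words, the all-one words, and the ``mixed'' words -- and every realizable pair stays inside one family. It therefore suffices to prove that, under the hypotheses, $H$ realizes \emph{all} pairs within each of the three families.

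The all-zero and all-one families are immediate: right-extending $(1)$ gives $0^{n+1}\rightarrow 0^{n}$ for all $n$, so by composition $H$ realizes every $0^m\rightarrow 0^n$; dually $(2)$ handles the all-one words. The substance of the proof is the mixed family, where I would show by induction on length that every mixed word is equivalent to the fixed representative $10$. The base case is $\{01,10\}$, handled by $(3)$. For the inductive step I would reduce a mixed word of length $\ge 3$ to a shorter mixed word according to its two-letter prefix: a prefix $00$ (resp.\ $11$) is shortened by the right-extension of $(1)$ (resp.\ of the inverse of $(2)$); a prefix $01$ is shortened by the right-extensions of $(4)$ and $(5)$, which turn $010w$ and $011w$ into $10w$; and a prefix $10$ is first turned into a prefix $01$ using the inverse of $(3)$ and then shortened as before. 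Transitivity then collapses the whole mixed family to $10$.

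I expect the mixed-word induction to be the main obstacle: one must check that every reduction keeps the word mixed (so the induction stays inside the family) and that the four prefix cases genuinely exhaust all mixed words of length $\ge 3$, with the $10$-prefix case being the only one needing two steps. Everything else is bookkeeping. This is exactly the verification that the five listed pairs generate the required finite family, so in the write-up one may alternatively simply invoke Lemma~10.6 of \cite{G16}, of which the argument above is a self-contained reconstruction.
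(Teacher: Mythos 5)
Your proposal is correct, but it is a genuinely different route from the paper's, because the paper does not prove Lemma \ref{suffice} at all: it simply records that the lemma is an immediate consequence of Remark 7.2 and Lemma 10.6 of \cite{G16}, delegating everything to the core machinery developed there (just as it quotes Theorem \ref{gen} from the same source). You instead give a self-contained argument: reduce $\Cl(H)=F$ to the observation that a piecewise-$H$ gluing over the intervals $[u_i]$ reproduces any $f\in F$ once $H$ realizes all of its branch pairs; note that since elements of $F$ fix $0$ and $1$, every branch pair of an element of $F$ stays within one of the three families (all-zero, all-one, mixed); and then run a length-reduction induction showing the five hypothesized pairs collapse each family to a single equivalence class, with $10$ as the mixed representative. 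I checked the induction and it is sound: the prefixes $00,11,01,10$ exhaust the mixed words of length at least $3$; each reduction preserves mixedness (e.g.\ $00u$ mixed forces a $1$ in $u$, so $0u$ is mixed); and the $10$-prefix case, via $10u\rightarrow 01u\rightarrow 10u'$, strictly decreases length after two steps, so the process terminates at the base case handled by pair (3). The closure properties you invoke (inverses, right-extension by a common suffix, composition) are all stated explicitly in Section 2.2 of this paper, so your argument borrows nothing from \cite{G16}; your attribution of them to Remark 7.2 of \cite{G16} is a guess and would be better replaced by a citation to Section 2.2. As for what each approach buys: the paper's citation is brief and consistent with its overall reliance on \cite{G16}, while your proof makes the lemma independent of the core/automaton machinery and, as a bonus, explains transparently why this particular list of five pairs suffices.
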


\subsection{Thompson groups T and V}

Consider the circle $S^1$ as the unit interval $[0,1]$ with endpoints $0$ and $1$ identified. In particular, 
we can refer to dyadic fractions and subintervals of $S^1$ (including subintervals of the form $[a,b]$ for $a>b$ which contain the point $0=1$). Thompson group $T$ is the group of all piecewise-linear orientation preserving homeomorphisms of $S^1$ which preserve the set of finite dyadic fractions, have finitely many breakpoints, all of which are at finite dyadic fractions and where all slopes are integer powers of $2$. Thompson group $F$ can be viewed as the subgroup of $T$ which fixes $0$. 

Thompson group $V$ is the group of left-continuous bijections of $S^1$ which map finite dyadic fractions to finite dyadic fractions, that are differentiable except at finitely many finite dyadic fractions and such that on each maximal interval where the functions are differentiable, they are linear with slope an integer power of $2$. Thompson group $T$ is clearly a subgroup of $V$. 

Elements of $T$ and $V$ can be represented by tree-diagrams. Let $T_+,T_-$ be a pair of trees with $n$ leaves each. Let $\sigma\in S_n$ be a permutation of $\{1,\dots,n\}$. The triple $(T_+,\sigma,T_-)$ represents an element of $V$ as follows. Let $u_1,\dots,u_n$ (resp. $v_1,\dots,v_n$) be the branches of $T_+$ (resp., $T_-$), listed from left to right. The function $g\in V$ \emph{represented} by $(T_+,\sigma,T_-)$ maps $.u_i\alpha$ to $.v_{\sigma(i)}\alpha$ for every $i$ and every infinite binary word $\alpha\not\equiv 0^{\mathbb{N}}$. In other words, $g$ maps the interval $(u_i]=(.u_i,.u_i1^{\mathbb{N}}]$ linearly onto the interval $(v_{\sigma(i)}]=(.v_{\sigma(i)},.v_{\sigma(i)}1^{\mathbb{N}}]$. The triple $(T_+,\sigma,T_-)$ is called a \emph{tree-diagram} of $g$. As in the case of tree diagrams of elements of $F$, one can define equivalent tree-diagrams (which represent the same element of $V$) and are obtained by insertion (or reduction) of common carets. A tree-diagram $(T_+,\sigma,T_-)$ with $\sigma\in S_n$ represents an element of Thompson group $T$ if and only if $\sigma$ is a power of the cycle $(1\ 2\ \dots\ n)$ (that is, if and only if $\sigma$ preserves the cyclic order of $\{1,\dots,n\}$).

Let $(T_+,\sigma,T_-)$ be a tree-diagram of an element $g$ in $V$. We can consider $T_+$ and $T_-$ as rooted subtrees of the complete infinite binary tree $T_{\infty}$. If $T_+$ and $T_-$ coincide; i.e., have the same set of branches $\{u_1,\dots,u_n\}$, then $\sigma$ can be viewed as a permutation on the set $\{u_1,\dots,u_n\}$. In that case, it is easy to see that $g$ is periodic of the same order as $\sigma$. If $T_+$ and $T_-$ do not coincide, then since $T_+$ and $T_-$ have the same number of carets, $T_+$ must have a branch $v$ such that $v$ is a strict prefix of some branches $vw_1,\dots,vw_m$  of $T_-$. Clearly, $m\ge 2$. We shall need the following lemma which follows immediately from \cite[Lemma 10.5]{Brin}.

\begin{lemma}\label{Brin_lem}
Let $g\in V$ be a non periodic element. Then $g$ is represented by a tree-diagram $(T_+,\sigma,T_-)$ such that the following holds. If $v$ is a branch of $T_+$ and $v$ is a proper prefix of branches $vw_1,\dots,vw_m$ of $T_-$ (for $m\ge 2$) then there is $k\in\{1,\dots,m\}$ and $r\in\mathbb{N}$ such that the intervals $(v],g(v],\dots,g^{r-1}(v]$ are pairwise disjoint and such that $g^r(v]=(vw_k]$.
\end{lemma}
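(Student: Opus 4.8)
The plan is to reduce the statement to the existence of a \emph{revealing} tree-diagram for $g$ — which is exactly what Brin's Lemma 10.5 provides — and then to read off the data $k$ and $r$ directly from the attracting orbit structure that such a diagram exposes. The first thing to pin down is the dynamical meaning of the hypothesis. If $v$ is a branch of $T_+$ and $v$ is a proper prefix of the branches $vw_1,\dots,vw_m$ of $T_-$, then $(v]$ is a single piece of the domain subdivision, so $g$ maps $(v]$ affinely onto a single dyadic interval, whereas on the range side $(v]$ is subdivided as the disjoint union $(v]=\bigsqcup_{j=1}^m (vw_j]$, each $(vw_j]$ being the $g$-image of a domain piece. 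Thus $v$ marks a place where $g^{-1}$ refines $(v]$ while $g$ does not, which is the combinatorial signature of an attracting periodic orbit of intervals passing through $(v]$. The goal is to produce a representative for $g$ in which this attracting behaviour is made explicit: the forward orbit $(v],g(v],\dots$ of the whole interval should first return inside $(v]$ precisely at one of the sub-pieces $(vw_k]$.

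Next I would invoke Brin's Lemma 10.5. Viewing $T_+,T_-$ as finite rooted subtrees of $T_{\infty}$ and recording the permutation $\sigma$ as the matching between their leaves, a revealing diagram is one in which every leaf of $T_+$ that is strictly extended in $T_-$ lies on a $g$-orbit of intervals that is eventually periodic with a strictly contracting first return. Since $g$ is non-periodic, its trees do not coincide, so at least one such extension occurs; Brin's lemma guarantees that, after inserting finitely many common carets (which does not change the element of $V$ represented), $g$ is represented by such a revealing diagram. For a leaf $v$ as in the hypothesis, the revealing property then supplies an integer $r\in\mathbb{N}$ for which the iterates $(v],g(v],\dots,g^{r-1}(v]$ are pairwise disjoint and $g^r(v]\subseteq (v]$ is one of the range sub-pieces sitting below $v$; that sub-piece is $(vw_k]$ for the appropriate $k\in\{1,\dots,m\}$, which is exactly the claim.

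The main obstacle is the translation between Brin's formulation and the statement as phrased here, rather than any new dynamical input. Concretely I expect the work to be in checking two points: that the first return of the interval $(v]$ lands on a single sub-piece, and that this sub-piece is exactly one of the $(vw_k]$ (and not a proper dyadic refinement of one of them), together with the fact that the returning power $r$ is uniform over all of $(v]$ and not merely along a single branch. Both follow from the defining property of a revealing pair, namely that the attracting orbit is genuinely periodic and that its first return is an affine contraction onto a dyadic sub-piece. Once the revealing diagram of Brin's Lemma 10.5 is in hand, the disjointness of $(v],\dots,g^{r-1}(v]$ and the identity $g^r(v]=(vw_k]$ are therefore immediate.
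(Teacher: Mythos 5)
Your proposal takes essentially the same route as the paper: the paper offers no independent argument for this lemma, deriving it directly from Brin's Lemma 10.5 on revealing pairs exactly as you do, with the attractor leaf giving $(vw_k]$, the return time $r$, and the disjointness of $(v],g(v],\dots,g^{r-1}(v]$ read off from the revealing-pair structure. Your additional remarks unpacking why the first return lands precisely on one of the pieces $(vw_j]$ are a faithful elaboration of that same citation rather than a different approach.
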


We will also need the following two remarks. Remark \ref{irrational} can be proved in a similar way to \cite[Proposition 3.1]{Sav} (see also \cite[Corollary 2.5]{GS2}). Remark \ref{transitive} follows easily from \cite[Lemma 4.2]{CFP}.

\begin{remark}\label{irrational}
Let $g\in V$ and assume that $g$ fixes an irrational number $\alpha$. Then $g$ fixes pointwise a small neighborhhod $(\alpha-\epsilon, \alpha+\epsilon)$ of $\alpha$.
\end{remark}

\begin{remark}\label{transitive}
Let $(a,b)$ and $(c,d)$ be open sub-intervals of $S^1$ with dyadic endpoints. Then there is an element $g\in T$ (and in particular, $g\in V$) which maps $(a,b)$ onto $(c,d)$. 
\end{remark}

\section{Thompson group F is invariably generated}

Recall that $x_0,x_1$ are the generators of $F$ defined above. In this section we prove that Thompson group $F$ is invariably generated by $\{x_0,x_1,x_0x_1\}$. We would need the following two lemmas. 

\begin{lemma}\label{conj}
Let $g\in F$. Let $H=\langle (x_0x_1)^g\rangle $. Then there is an element $f\in H$ such that for some $m,n\in\mathbb{N}$ the function $f$ has the pairs of branches 
\begin{enumerate}
\item[(1)] $0^m10\rightarrow 1^n0$. 
\item[(2)] $0^m11\rightarrow 1^{n+1}0$. 
\end{enumerate}
\end{lemma}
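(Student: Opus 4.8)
The plan is to read off the local structure of the generator $x_0x_1$ at its two fixed endpoints, to promote it to a single ``crossing'' pair of branches living inside a high power $(x_0x_1)^k$, and finally to transport everything through the conjugation by $g$, which near $0$ and near $1$ is merely a shift of binary digits.

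First I would compute $x_0x_1$ directly from the digit descriptions (recall composition is left to right, so $x_0x_1(t)=x_1(x_0(t))$); one finds the pairs of branches $00\to 0$, $010\to 10$, $011\to 110$, $1\to 111$, i.e. the piecewise-linear map equal to $2t$ on $[0,\tfrac38]$, to $t+\tfrac38$ on $[\tfrac38,\tfrac12]$, and to $\tfrac14 t+\tfrac34$ on $[\tfrac12,1]$. The two facts I really need are the germs at the endpoints. Since the slope at $0^+$ is $2$, for every $s\ge 2$ and every finite word $w$ the element $x_0x_1$ has the pair $0^sw\to 0^{s-1}w$; since the slope at $1^-$ is $\tfrac14$, for every $r\ge 1$ and every $w$ it has the pair $1^rw\to 1^{r+2}w$. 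I also record the single ``crossing'' pairs $010\to 10$ and $011\to 110$.

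Next, for fixed $a\ge 2$ and $j\ge 0$, I would chain these pairs. Applying the germ-at-$0$ pairs $a-1$ times carries $[0^a10]$ to $[010]$ and $[0^a11]$ to $[011]$; one application of the crossing pairs carries these to $[10]=[1^10]$ and $[110]=[1^20]$; and applying the germ-at-$1$ pairs $j$ times carries these to $[1^{1+2j}0]$ and $[1^{2+2j}0]$. Since at every stage it is the \emph{same} power of $x_0x_1$ that acts on the two adjacent intervals, the single element $(x_0x_1)^{a+j}$ has the pairs of branches $0^a10\to 1^c0$ and $0^a11\to 1^{c+1}0$, where $c=1+2j$. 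In particular both the left-hand depth $a$ and the right-hand depth $c$ can be taken arbitrarily large.

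Finally I would conjugate. Writing $g'(0^+)=2^{\mathfrak a}$ and $g'(1^-)=2^{\mathfrak b}$ with $\mathfrak a,\mathfrak b\in\mathbb Z$, linearity of $g$ near each endpoint shows that for $s,r$ large $g^{-1}$ has the pair $0^sw\to 0^{s+\mathfrak a}w$ near $0$ and $g$ has the pair $1^rw\to 1^{r-\mathfrak b}w$ near $1$, both being digit shifts. Now I choose $a$ (hence $m=a-\mathfrak a$) and $c$ (hence $n=c-\mathfrak b$) large enough that $[0^m10],[0^m11]$ lie in the linear region of $g^{-1}$ at $0$ and $[1^c0],[1^{c+1}0]$ lie in the linear region of $g$ at $1$, and set $k=a+j$. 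Since $\big((x_0x_1)^g\big)^k=g^{-1}(x_0x_1)^kg\in H$, composing (left to right) the germ of $g^{-1}$ at $0$, the crossing pairs of $(x_0x_1)^k$, and the germ of $g$ at $1$ yields, for $f=\big((x_0x_1)^g\big)^k$, the pairs $0^m10\to 1^n0$ and $0^m11\to 1^{n+1}0$, which is exactly the assertion.

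The only place where the generality of $g$ enters, and the step I expect to be delicate, is that the behaviour of $g$ on the interior of $(0,1)$ is completely uncontrolled. I circumvent this by using a power of $x_0x_1$ large enough that its crossing pair already connects an interval deep inside the linear region of $g^{-1}$ at $0$ to one deep inside the linear region of $g$ at $1$; then conjugation only ever sees the endpoint germs of $g$, which are harmless digit shifts, while the rest of $g$ never interacts with the computation. The routine bookkeeping I have suppressed is the verification that the chosen depths $a,c$ simultaneously satisfy all of the linear-region constraints and that $m,n$ emerge as positive integers, both of which hold once $a$ and $c$ are sufficiently large.
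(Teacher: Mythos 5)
Your proposal is correct and takes essentially the same route as the paper: the paper encodes the endpoint germs of $g$ by its branch pairs $0^a\rightarrow 0^b$ and $1^c\rightarrow 1^d$ and conjugates the exact power $(x_0x_1)^{a+c}$, which is precisely your digit-shift/slope formulation ($g'(0^+)=2^{a-b}$, $g'(1^-)=2^{c-d}$) with ``sufficiently large'' parameters replaced by exact exponents. The chaining of the pairs $00\rightarrow 0$, $010\rightarrow 10$, $011\rightarrow 110$, $1\rightarrow 111$ into the crossing pairs of a single power is identical in both arguments.
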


\begin{proof}
The function $g$ has pairs of branches $0^a\rightarrow 0^b$ and $1^c\rightarrow 1^d$ for some $a,b,c,d\in\mathbb{N}$.
We observe that $x_0x_1$ is composed of the following pairs of branches. 
\[
  \begin{cases}
	00 & \rightarrow 0\\
	010  & \rightarrow 10\\
	011 & \rightarrow 110\\
	1 & \rightarrow 111\
  \end{cases}
\]

Let $h=x_0x_1$. We note that the pairs of branches $00\rightarrow 0$ and $010\rightarrow 10$ of $h$ imply that $h^a$ has the pair of branches $0^a10\rightarrow 10$. Indeed, if $a=1$, this is clear. If $a>1$, then $h$ takes $0^a10$ to $0^{a-1}10$ and we are done by induction. Similarly, the pairs of branches $00\rightarrow 0$ and $011\rightarrow 110$ of $h$ imply that $h^a$ has the pair of branches $0^a11\rightarrow 110$.
The pair of branches $1\rightarrow 111$  of $h$ implies that $h^c$ has the pairs of branches $10\rightarrow 1^{2c+1}0$ and $110\rightarrow 1^{2c+2}0$. Hence $h^{a+c}$ has the pairs of branches $0^a10\rightarrow 1^{2c+1}0$ and $0^a11\rightarrow 1^{2c+2}0$. 

Let $f=(h^{a+c})^g$. Then $f$ has the pairs of branches 
$$0^b10\rightarrow 1^{c+d+1}0\ \mbox{ and }\  0^b11\rightarrow 1^{c+d+2}0.$$
Indeed, $g^{-1}$ takes $0^b10$ to $0^a10$, then $h^{a+c}$ takes $0^a10$ to $1^{2c+1}0$. Then $g$ takes $1^c1^{c+1}0$ to $1^d1^{c+1}0$. That gives the first pair of branches. The proof for the second one is similar.
Therefore, $f\in H$ is an element as described.  
\end{proof}

\begin{lemma}\label{prop}
Let $g\in F$. Let $H=\langle x_0,(x_0x_1)^g\rangle$. Then $\Cl(H)=F$.
\end{lemma}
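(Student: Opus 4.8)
The plan is to apply Lemma~\ref{suffice}, which reduces $\Cl(H)=F$ to exhibiting elements of $H=\langle x_0,(x_0x_1)^g\rangle$ realizing the five specific pairs of branches (1)--(5). I already have $x_0\in H$, and $x_0$ itself supplies several of these from its branch description: reading off the action of $x_0$ on binary words, $x_0$ has the pairs $00\to 0$, $01\to 10$, and $1\to 11$, so conditions (1), (2), and (3) of Lemma~\ref{suffice} are satisfied immediately by $x_0$. What remains is to produce elements of $H$ realizing (4) $010\to 10$ and (5) $011\to 10$.

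\emph{First} I would invoke Lemma~\ref{conj} to obtain an element $f\in\langle (x_0x_1)^g\rangle\le H$ with the pairs of branches $0^m10\to 1^n0$ and $0^m11\to 1^{n+1}0$ for some $m,n\in\mathbb{N}$. \emph{Next} I would combine $f$ with suitable powers of $x_0$, using the composition rule for pairs of branches recorded at the end of Section~\ref{sec:red}: if one element has $u\to v$ and another has $v\to w$, their product has $u\to w$. Since $x_0$ has $00\to 0$, by induction $x_0^{m-1}$ carries $0^m\to 0$ (more precisely $x_0$ has $0^k\to 0^{k-1}$ in the appropriate sense, so a suitable power of $x_0$ takes a single left branch $0^m$ down to a shorter prefix), which lets me prepend a power of $x_0$ to feed the correct input $0^m10$ into $f$; and since $x_0$ has $1\to 11$, a power of $x_0$ can carry the output $1^n0$ or $1^{n+1}0$ to a controlled target such as $10$. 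The idea is to sandwich $f$ between powers of $x_0$, i.e.\ to form words like $x_0^{s}\,f\,x_0^{t}\in H$, chosen so that the composite pair of branches becomes exactly $010\to 10$ for condition (4) and $011\to 10$ for condition (5).

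The main bookkeeping obstacle will be matching the prefixes and suffixes exactly: I must track how $x_0$ acts on branches of the form $0^k$ and $1^k$ and check that the intermediate branches line up so that the composition rule applies (recall also the refinement rule that $u\to v$ implies $uw\to vw$, which I may need to pad the branches to a common length). In particular, getting both (4) and (5) may require two different elements of $H$, each a product of $f$ with appropriate powers of $x_0$, and I should verify that the exponents $m,n$ produced by Lemma~\ref{conj} can indeed be absorbed by powers of $x_0$ to land precisely on the branches $010\to 10$ and $011\to 10$. Once all five pairs of Lemma~\ref{suffice} are accounted for, Lemma~\ref{suffice} yields $\Cl(H)=F$, completing the proof.
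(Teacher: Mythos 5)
Your proposal is correct and follows essentially the same route as the paper: reduce to Lemma~\ref{suffice}, note that $x_0$ handles pairs (1)--(3), obtain $f$ from Lemma~\ref{conj}, and sandwich it between powers of $x_0$ to realize $010\to 10$ and $011\to 10$. The bookkeeping you defer works out exactly as you anticipate: the paper takes $h_1=x_0^{-(m-1)}fx_0^{-(n-1)}$ and $h_2=x_0^{-(m-1)}fx_0^{-n}$, using that $x_0^{-(m-1)}$ has the pair $010\to 0^m10$ (resp.\ $011\to 0^m11$) and that negative powers of $x_0$ carry $1^n0$ (resp.\ $1^{n+1}0$) to $10$.
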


\begin{proof}
The element $x_0$ is composed of the following pairs of branches. 
\[
  x_0=
  \begin{cases}
  00 &  \rightarrow		0\\
  01 &  \rightarrow 10\\
	1 &  \rightarrow 11\
	  \end{cases} 
\]
Since $x_0\in H$, by Lemma \ref{suffice}, it suffices to prove that there is an element in $\Cl(H)$ with the pair of branches $010\rightarrow 10$ and an element in $H$ with the pair of branches $011\rightarrow 10$. 

By Lemma \ref{conj}, there is an element $f\in H$ with pairs of branches 
$0^m10\rightarrow 1^n0$ and $0^m11\rightarrow 1^{n+1}0$ for some $m,n\in\mathbb{N}$.
Let 
$$h_1=x_0^{-(m-1)} f x_0^{-(n-1)} \ \mbox{ and }\ h_2=x_0^{-(m-1)} f x_0^{-n}.$$
Then $h_1,h_2\in H$. The function $h_1$ has the pair of branches $010\rightarrow 10$. Indeed, $x_0^{-(m-1)}$ has the pair of branches $010\rightarrow 0^{m}10$, then $f$ has the pair of branches $0^m10\rightarrow 1^n0$, then $x_0^{-(n-1)}$ has the pair of branches $1^n0\rightarrow 10$. Similarly, $h_2$ has the pair of branches 
$011\rightarrow 10$. Therefore, $\Cl(H)=F$. 
\end{proof}

\begin{theorem}\label{thm:F}
Thompson group $F$ is invariably generated by $\{x_0,x_1,x_0x_1\}$. 
\end{theorem}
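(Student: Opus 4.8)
The plan is to fix an arbitrary choice of conjugators $g_0,g_1,g_2\in F$, set $H=\langle x_0^{g_0},x_1^{g_1},(x_0x_1)^{g_2}\rangle$, and prove $H=F$ by verifying the three conditions of Theorem~\ref{gen}. Since the $g_i$ are arbitrary, this shows that $\{x_0,x_1,x_0x_1\}$ invariably generates $F$, which is the assertion of the theorem. The three conditions will be checked independently, each exploiting one of the three generators together with the conjugation-invariance of the relevant structure.

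For Condition~(1), $\Cl(H)=F$, I would first record that the closure operation is equivariant under conjugation: $\Cl(H^g)=\Cl(H)^g$ for every $g\in F$, since a piecewise-$H$ function conjugated by $g$ is piecewise-$H^g$ and $g$ carries dyadic subdivisions to dyadic subdivisions. Conjugating by $g_0^{-1}$ gives $H^{g_0^{-1}}=\langle x_0,\,x_1^{g_1g_0^{-1}},\,(x_0x_1)^{g_2g_0^{-1}}\rangle$, which contains $\langle x_0,(x_0x_1)^{g_2g_0^{-1}}\rangle$. By Lemma~\ref{prop} the latter subgroup already has closure $F$, so $\Cl(H^{g_0^{-1}})=F$ and hence $\Cl(H)=\bigl(\Cl(H^{g_0^{-1}})\bigr)^{g_0}=F$.

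For Condition~(2), $H[F,F]=F$, I would use the well-known fact that the abelianization $F/[F,F]\cong\BZ^2$ is computed by the germ map $g\mapsto(\log_2 g'(0^+),\log_2 g'(1^-))$, under which $x_0\mapsto(1,-1)$ and $x_1\mapsto(0,-1)$ form a basis. Because the image of an element in the abelianization is unchanged by conjugation, the image of $H$ is generated by the images of $x_0,x_1,x_0x_1$, hence equals all of $\BZ^2$; thus $H[F,F]=F$. For Condition~(3), observe that $x_1$ fixes the dyadic fraction $\tfrac12$ with $x_1'(\tfrac12^-)=1$ and $x_1'(\tfrac12^+)=2$. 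Conjugation by the orientation-preserving dyadic homeomorphism $g_1$ moves this to a finite dyadic fraction $\beta\in(0,1)$ fixed by $x_1^{g_1}\in H$, and the one-sided slopes of $g_1$ cancel those of $g_1^{-1}$, so $(x_1^{g_1})'(\beta^-)=1$ and $(x_1^{g_1})'(\beta^+)=2$. This produces the element required by Condition~(3).

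Having verified all three conditions, Theorem~\ref{gen} gives $H=F$, completing the argument. I expect the main point to be the equivariance reduction in Condition~(1): since our first generator is the conjugate $x_0^{g_0}$ rather than $x_0$, Lemma~\ref{prop} does not apply on the nose, and conjugating the whole group back by $g_0^{-1}$ is what brings it into the form to which Lemma~\ref{prop} applies. The remaining two conditions are comparatively routine, resting only on the conjugation-invariance of the abelianized image and on the fact that conjugation by a dyadic piecewise-linear homeomorphism preserves one-sided germs at a fixed point.
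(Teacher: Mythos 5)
Your proof is correct and follows essentially the same route as the paper: verify the three conditions of Theorem~\ref{gen}, using Lemma~\ref{prop} for $\Cl(H)=F$, conjugation-invariance of the abelianized image for $H[F,F]=F$, and the conjugate $x_1^{g_1}$ for the slope condition. The only cosmetic difference is that the paper normalizes the conjugator of $x_0$ to the identity at the outset (noting it suffices to show $\langle x_0, x_1^h, (x_0x_1)^g\rangle = F$ for all $h,g$, since conjugating the whole generating set by $g_0$ does not affect whether it generates $F$), whereas you carry $g_0$ along and invoke the equivariance $\Cl(H^g)=\Cl(H)^g$ --- a valid but slightly more laborious bookkeeping choice.
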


\begin{proof}
It suffices to prove that for any $h,g\in F$, the set $X=\{x_0,x_1^h,(x_0x_1)^g\}$ is a generating set of $F$. Let $H$ be the subgroup of $F$ generated by $X$. Then $H[F,F]=F$ (indeed, the image of $X$ in the abelianization of $F$ contains the image of the generating set $\{x_0,x_1\}$). 
By Lemma \ref{prop}, $\Cl(H)=F$. In addition, since $x_1$ fixes $\frac{1}{2}$ and has slope $1$ at $\frac{1}{2}$ on the left and slope $2$ on the right, the element $x_1^h$ fixes the finite dyadic fraction $\alpha=h(\frac{1}{2})$ and has slope $1$ at $\alpha$ on the left and slope $2$ on the right. Hence, by Theorem \ref{gen}, we have $H=F$. 
\end{proof}

\section{Thompson groups T and V are not invariably generated}

To prove that $T$ and $V$ are not invariably generated we will need the definition of a $\gamma$-wandering set. 

\begin{definition}
Let $\gamma$ be an element of $V$. A subset $U\subseteq S^1$ is \emph{$\gamma$-wandering} if for every $n\in\mathbb{Z}$ with $\gamma^n\neq e$ we have $\gamma^n(U)\cap U=\emptyset$. The set $U$ is \emph{weakly $\gamma$-wandering} if for every $n\in\mathbb{Z}$, either $\gamma^n$ fixes $U$ pointwise or $\gamma^n(U)$ is disjoint from $U$. 
\end{definition}

\begin{lemma}\label{lem1}
Let $\gamma\in V$ be a non-periodic element. Then there is an open $\gamma$-wandering set. 
\end{lemma}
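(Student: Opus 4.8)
The plan is to build the wandering set directly from the self-similar nesting supplied by Lemma~\ref{Brin_lem}. Since $\gamma$ is non-periodic, the tree-diagram $(T_+,\sigma,T_-)$ provided by Lemma~\ref{Brin_lem} cannot have $T_+$ and $T_-$ coinciding (otherwise $\gamma$ would be periodic, as noted just before that lemma). Hence, as explained in the paragraph preceding Lemma~\ref{Brin_lem}, $T_+$ has a branch $v$ that is a proper prefix of branches $vw_1,\dots,vw_m$ of $T_-$ with $m\ge 2$. Applying Lemma~\ref{Brin_lem} to this $v$, I obtain $k\in\{1,\dots,m\}$ and $r\in\mathbb{N}$ such that the intervals $(v],\gamma(v],\dots,\gamma^{r-1}(v]$ are pairwise disjoint and $\gamma^r(v]=(vw_k]$. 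Writing $J=(v]$ and $J'=(vw_k]$, the key point is that $J'\subsetneq J$ (since $vw_k$ strictly extends $v$), so $\gamma^r$ maps $J$ properly inside itself.

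The candidate wandering set is then any nonempty open interval $W$ contained in $J\setminus\overline{J'}$; this difference has nonempty interior precisely because $J'$ is a proper subinterval of $J$, so such a $W$ exists (one may take $W$ to be one of the two open intervals cut out by the endpoints of $J'$ inside $J$). I also set $J_s=\gamma^s(v]$ for $0\le s\le r-1$, so that $J_0=J$ and the $J_s$ are pairwise disjoint by Lemma~\ref{Brin_lem}.

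To verify that $W$ is $\gamma$-wandering it suffices, since $\gamma$ is non-periodic, to check $\gamma^n(W)\cap W=\emptyset$ for every $n\ge 1$ (the case $n\le -1$ follows by applying $\gamma^{-n}$). For this I write $n=qr+s$ with $q\ge 0$ and $0\le s\le r-1$ and track where $\gamma^n(W)$ lands. First, nestedness $(\gamma^r)^q(J)\subseteq\gamma^r(J)=J'$ for $q\ge 1$ follows by induction from $\gamma^r(J)=J'\subseteq J$; consequently $\gamma^{qr}(W)\subseteq J'$ for $q\ge 1$, which is disjoint from $W\subseteq J\setminus\overline{J'}$. This settles the case $s=0$. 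For $s\ge 1$ (and any $q\ge 0$) I have $\gamma^{qr}(W)\subseteq J$, hence $\gamma^n(W)=\gamma^s(\gamma^{qr}(W))\subseteq\gamma^s(J)=J_s$, which is disjoint from $J_0=J\supseteq W$. Together these two cases cover all $n\ge 1$, giving $\gamma^n(W)\cap W=\emptyset$ and hence the claim.

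The core of the argument is Lemma~\ref{Brin_lem}, which delivers both the contraction $\gamma^r(v]\subsetneq(v]$ and the disjointness of the intermediate iterates $(v],\dots,\gamma^{r-1}(v]$; once these are in hand the remaining work is the modular bookkeeping above. The one place demanding care is confirming that a suitable branch $v$ exists and that $\gamma^r$ strictly contracts $(v]$ — both hinge on non-periodicity forcing $T_+\ne T_-$ — together with the nestedness step $(\gamma^r)^q(J)\subseteq J'$, which is exactly what lets the single relation $\gamma^r(v]=(vw_k]$ control all multiples of $r$ simultaneously while the disjointness of the $J_s$ handles the intermediate residues.
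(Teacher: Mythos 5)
Your proposal is correct and follows essentially the same route as the paper: both invoke Lemma~\ref{Brin_lem} to get a branch $v$ with $\gamma^r(v]=(vw_k]\subsetneq(v]$ and pairwise disjoint intermediate images, then verify wandering via the same decomposition $n=qr+s$ with the nesting $\gamma^{qr}(v]\subseteq(vw_k]$ handling $s=0$ and disjointness of $(v],\gamma(v],\dots,\gamma^{r-1}(v]$ handling $s\neq 0$. The only (immaterial) difference is your choice of wandering set: an open interval in $(v]\setminus\overline{(vw_k]}$, whereas the paper takes a sibling interval $(vw_j]$ with $j\neq k$ (using $m\ge 2$) and then passes to an open subinterval.
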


\begin{proof}
Let $(T_+,\sigma,T_-)$ be a tree-diagram of $\gamma$ which satisfies the conditions of Lemma \ref{Brin_lem}. By the discussion preceding that lemma, $T_+$ has a branch $v$ which is a proper prefix of some branches $vw_1,\dots,vw_m$ of $T_-$, where $m\ge 2$. Then, by Lemma \ref{Brin_lem}, there is $k\in\{1,\dots,m\}$ and $r\in\mathbb{N}$ such that the intervals $(v],g(v],\dots,g^{r-1}(v]$ are pairwise disjoint and such that $g^r(v]=(vw_k]$. Let $j\in\{1,\dots,m\}$ be distinct from $k$. We claim that $(vw_j]$ is $\gamma$-wandering (and as such, every open interval contained in $(vw_j]$ is also $\gamma$-wandering). Indeed, let $n\in\mathbb{Z}\setminus\{0\}$. If $\gamma^n(U)\cap U\neq\emptyset$ then $U\cap \gamma^{-n}(U)\neq\emptyset$. Hence, we can assume that $n$ is positive. We write $n=ar+b$ where $a\ge 0$ and $b\in\{0,\dots,r-1\}$. Since $\gamma^r(v]=(vw_k]\subseteq(v]$, we have $\gamma^{ar}(v]\subseteq(vw_k]$. Hence $\gamma^{ar}(vw_j]\subseteq(vw_k]$. If $b=0$ we are done since $j\neq k$ implies that $(vw_j]\cap(vw_k]=\emptyset$. If $b\neq 0$ then $\gamma^{ar+b}(vw_j]\subseteq\gamma^b(v]$. Since $b\in\{1,\dots,r-1\}$, $\gamma^b(v]$ is disjoint from $(v]$. In particular, $\gamma^{ar+b}(vw_j]$ is disjoint from $(vw_j]$. 
\end{proof}

\begin{lemma}\label{lem2}
Let $\gamma$ be a periodic element of $V$. Then $\gamma$ admits an open weakly $\gamma$-wandering set. If $\gamma\in T$ then $\gamma$ admits an open $\gamma$-wandering set (in fact, in that case, any weakly $\gamma$-wandering set is $\gamma$-wandering). 
\end{lemma}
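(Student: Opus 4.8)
The plan is to treat the two assertions separately, in both cases starting from an irrational point and exploiting that an element of $V$ fixing an irrational number fixes an entire neighborhood of it (Remark \ref{irrational}); the tree-diagram machinery is not needed here.

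For the first assertion, let $d$ be the order of $\gamma$ and fix an arbitrary irrational point $\alpha\in S^1$. On each of its linear pieces $\gamma$ has the form $t\mapsto 2^kt+c$ with $c$ dyadic (the breakpoints and the images of dyadics are dyadic), so $\gamma$ carries irrationals to irrationals; hence every orbit point $\gamma^i(\alpha)$ is irrational. Let $\ell\ge 1$ be minimal with $\gamma^\ell(\alpha)=\alpha$, so that $\ell\mid d$ and the points $\alpha,\gamma(\alpha),\dots,\gamma^{\ell-1}(\alpha)$ are pairwise distinct. By Remark \ref{irrational} applied to $\gamma^\ell$, there is an open neighborhood $N$ of $\alpha$ fixed pointwise by $\gamma^\ell$. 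I then choose an open interval $U\ni\alpha$ with $U\subseteq N$ and small enough that $U,\gamma(U),\dots,\gamma^{\ell-1}(U)$ are pairwise disjoint; this is possible because the limit points $\gamma^i(\alpha)$ are distinct and each $\gamma^i$ is continuous (indeed locally affine) at $\alpha$, as all intermediate orbit points are irrational. To verify that $U$ is weakly $\gamma$-wandering, write a given $n\in\BZ$ as $n=q\ell+r$ with $0\le r<\ell$. Since $\gamma^\ell$ fixes $U$ pointwise, so does $\gamma^{q\ell}$, whence $\gamma^n|_U=\gamma^r|_U$ and $\gamma^n(U)=\gamma^r(U)$. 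If $r=0$ then $\gamma^n$ fixes $U$ pointwise, and if $r\in\{1,\dots,\ell-1\}$ then $\gamma^n(U)=\gamma^r(U)$ is disjoint from $U$ by construction. This is exactly the weakly $\gamma$-wandering condition.

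For the second assertion, the crucial point is that a nontrivial periodic element of $T$ has no fixed point whatsoever. Indeed, suppose $\beta\in T$ is periodic and $\beta(p)=p$. If $p$ is irrational, Remark \ref{irrational} shows $\beta$ fixes a whole neighborhood of $p$, which necessarily contains a dyadic point; thus in either case $\beta$ fixes some dyadic point $q$. The stabilizer of $q$ in $T$ is isomorphic to $F$ (one cuts the circle at the fixed point $q$ to obtain an orientation-preserving piecewise-linear self-homeomorphism of $[0,1]$ fixing both endpoints, i.e.\ an element of $F=\mathrm{Stab}_T(0)$), and $F$ is torsion-free; since $\beta$ is a torsion element of this stabilizer, $\beta=e$. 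Applying this to each power $\gamma^n$ with $\gamma^n\neq e$ shows that no such power can fix a nonempty open set pointwise. Consequently, for a weakly $\gamma$-wandering open set $U$ (which exists by the first assertion) and any $n$ with $\gamma^n\neq e$, the alternative ``$\gamma^n$ fixes $U$ pointwise'' is impossible, so $\gamma^n(U)\cap U=\emptyset$. Hence every weakly $\gamma$-wandering set is $\gamma$-wandering, and an open $\gamma$-wandering set exists.

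The step I expect to be the main obstacle is establishing that nontrivial periodic elements of $T$ are fixed-point-free, since this is where orientation-preservation is genuinely used and is exactly what fails in $V$. The cleanest route I see is the reduction above to the torsion-freeness of $F$ via the stabilizer of a dyadic point; the one delicate point is converting a fixed \emph{irrational} point into a fixed \emph{dyadic} point, which is precisely what Remark \ref{irrational} supplies.
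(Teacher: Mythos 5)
Your proof of the first assertion is essentially the paper's own argument: both start from an irrational point, apply Remark \ref{irrational} to the smallest power of $\gamma$ fixing that point to get a pointwise-fixed neighborhood, and shrink it so that the first $\ell-1$ images are disjoint from it; the verification via $n=q\ell+r$ is implicit in the paper and explicit in your write-up. (Cosmetic differences: the paper takes an irrational $b$ \emph{not} fixed by $\gamma$ and works with half-open intervals $(a,b]$ using left-continuity, whereas you take an arbitrary irrational and open intervals, justifying continuity by local affinity of elements of $V$ at irrational points --- a correct observation, as is your argument that the orbit of an irrational consists of irrationals because each linear piece has the form $t\mapsto 2^kt+c$ with $c$ dyadic.) For the second assertion you take a genuinely different route. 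The paper invokes the classical dynamical fact (citing \cite{KH}) that an orientation-preserving homeomorphism of $S^1$ cannot have two finite orbits of different periods, so for periodic $\gamma\in T$ every orbit has period equal to the order of $\gamma$, which makes the constructed interval wandering. You instead prove that nontrivial torsion elements of $T$ are fixed-point-free by pure algebra: a fixed dyadic point places the element in a point stabilizer conjugate (e.g., by a dyadic rotation, which lies in $T$) to $F=\mathrm{Stab}_T(0)$, and $F$ is torsion-free. Your route is more self-contained in that it avoids rotation-number theory, at the cost of importing torsion-freeness of $F$ (standard, in \cite{CFP}, but nowhere stated in this paper); the paper's route gives the sharper dynamical picture (all orbits have the same period) for free.

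There is one small but genuine gap in your fixed-point-freeness claim: you treat a fixed point $p$ that is irrational or dyadic (``in either case''), but not rational non-dyadic, and Remark \ref{irrational} really does fail at non-dyadic rationals for general elements of $V$ --- a linear piece $t\mapsto 2^kt+c$ with $k\neq 0$ and $c$ dyadic can fix, say, $1/3$ without being the identity nearby. For the existence statement of Lemma \ref{lem2} this is harmless, since there you only need that no nontrivial power of $\gamma$ fixes a nonempty \emph{open} set pointwise, and open sets contain dyadic points, so your dyadic case suffices. But the parenthetical claim (``any weakly $\gamma$-wandering set is $\gamma$-wandering'') quantifies over arbitrary subsets, e.g.\ $U=\{1/3\}$, so there you need fixed-point-freeness at every point. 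The patch is immediate and worth recording: a non-dyadic $p$ is not a breakpoint, so a periodic $\beta\in T$ fixing $p$ is linear near $p$ with slope $2^k$; if $\beta$ has order $d$, then for $t$ sufficiently close to $p$ all of $\beta(t),\dots,\beta^d(t)$ stay in the linear neighborhood, so $\beta^d(t)=p+2^{kd}(t-p)$, and $\beta^d=e$ forces $k=0$ and then $\beta=\mathrm{id}$ near $p$; in particular $\beta$ fixes a dyadic point, reducing to your dyadic case. With that one case added, your argument is complete.
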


\begin{proof}
Assume that $\gamma\in V$ has period $n$. We can assume that $\gamma\neq e$ (equivalently, that $n>1$). Let $b$ be an irrational number in $S^1$ such that $b$ is not fixed by $\gamma$. Let $m$ be the period of $b$ under the action of $\langle \gamma\rangle$. Since $\gamma,\dots,\gamma^{m-1}$ are left-continuous and do not fix $b$, for a small enough interval $(a,b]$, the intervals $\gamma(a,b],\dots,\gamma^{m-1}(a,b]$ are disjoint from $(a,b]$. Since $\gamma^m$ fixes $b$ and $b$ is irrational, by Remark \ref{transitive}, $\gamma^m$ fixes pointwise an open neighborhood of $b$. In particular, we can assume that $(a,b]$ is fixed pointwise by $\gamma^m$. Then the interval $(a,b]$ (and as such, the interval $(a,b)$) is weakly $\gamma$-wandering. 

Now assume that $\gamma\in T$. Then the period $m$ of $b$ is equal to the order $n$ of $\gamma$. Indeed, $\gamma$ being periodic means that every $x\in S^1$ has a finite orbit under the action of $\gamma$. But an orientation preserving homeomorphism of $S^1$ cannot have two finite orbits of different periods (see, for example, \cite{KH}). Thus, every orbit of $\langle\gamma\rangle$ is of period $m$. That implies that the order of $\gamma$ is $m$. As noted above, $\gamma(a,b],\dots,\gamma^{m-1}(a,b]$ are disjoint from $(a,b]$. Hence, $(a,b]$ is a $\gamma$-wandering set. 
\end{proof}

Lemmas \ref{lem1} and \ref{lem2} imply the following corollaries. The proof of Corollary \ref{cor2} is almost identical to the proof of Corollary \ref{cor1}.

\begin{corollary}\label{cor1}
Let $\gamma\in V$ and let $A\subset S^1$ be a closed subset of $S^1$ such that $A\neq S^1$. Then there exists $g\in V$ such that $A$ is weakly $\gamma^g$-wandering. 
\end{corollary}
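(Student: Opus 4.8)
The plan is to reduce the statement to the existence of a weakly $\gamma$-wandering open interval together with the transitivity of $V$ on dyadic intervals supplied by Remark \ref{transitive}. The starting point is the elementary observation that wandering behaviour is conjugation-equivariant: writing $\gamma^g=g^{-1}\gamma g$ and using that the elements of $V$ are bijections, for every $n$ one has $(\gamma^g)^n(A)\cap A=\emptyset$ if and only if $\gamma^n(g^{-1}(A))\cap g^{-1}(A)=\emptyset$, and likewise $(\gamma^g)^n$ fixes $A$ pointwise if and only if $\gamma^n$ fixes $g^{-1}(A)$ pointwise. Hence $A$ is weakly $\gamma^g$-wandering if and only if $g^{-1}(A)$ is weakly $\gamma$-wandering. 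I would also record the trivial monotonicity of the definition: any subset of a weakly $\gamma$-wandering set is again weakly $\gamma$-wandering. Combining these two remarks, it suffices to produce a weakly $\gamma$-wandering open interval $I$ with dyadic endpoints and an element $g\in V$ with $g^{-1}(A)\subseteq I$.

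Next I would manufacture the interval $I$. If $\gamma=e$ the claim is vacuous, since every subset of $S^1$ is weakly $e$-wandering; so assume $\gamma\neq e$. If $\gamma$ is non-periodic, Lemma \ref{lem1} produces an open $\gamma$-wandering set sitting inside a dyadic interval $(vw_j]$, whose endpoints $.vw_j$ and $.vw_j1^{\mathbb{N}}=.vw_j+2^{-|vw_j|}$ are dyadic; the open interval with these same endpoints is contained in $(vw_j]$ and is therefore $\gamma$-wandering, in particular weakly $\gamma$-wandering, by Lemma \ref{lem1}. If $\gamma$ is periodic, Lemma \ref{lem2} provides an open weakly $\gamma$-wandering interval $(a,b)$ (with $b$ irrational), and any open subinterval of it is again weakly $\gamma$-wandering, so I shrink it to one with dyadic endpoints. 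In either case this yields the required $I$.

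Finally I would place $A$ inside a proper arc and transport. Since $A$ is closed and $A\neq S^1$, its complement is a nonempty open set, so by density of the dyadics there are dyadic $p<q$ with $[p,q]\cap A=\emptyset$; then $A\subseteq J:=S^1\setminus[p,q]$, an open arc with dyadic endpoints (of the type containing the point $0=1$) with $J\neq S^1$. Applying Remark \ref{transitive} to the two proper dyadic intervals $I$ and $J$ yields $g\in T\subseteq V$ with $g(I)=J$; then $g^{-1}(A)\subseteq g^{-1}(J)=I$, so $g^{-1}(A)$ is weakly $\gamma$-wandering, and by the conjugation equivalence of the first paragraph $A$ is weakly $\gamma^g$-wandering, as required.

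I expect no serious obstacle here, since the substantive work is already carried out in Lemmas \ref{lem1} and \ref{lem2}; the one point demanding care is the direction of the conjugation—keeping track of whether it is $g(A)$ or $g^{-1}(A)$ that must land in $I$ under the left-to-right composition convention—together with the bookkeeping needed to ensure that both $I$ and $J$ are genuine proper subintervals with dyadic endpoints so that Remark \ref{transitive} applies.
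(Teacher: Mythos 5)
Your proof is correct and takes essentially the same route as the paper's: produce an open weakly $\gamma$-wandering interval with dyadic endpoints via Lemmas \ref{lem1} and \ref{lem2}, enclose $A$ in a proper open arc with dyadic endpoints, transport one to the other by an element $g\in T$ supplied by Remark \ref{transitive}, and conclude via the conjugation-equivariance and monotonicity of weak wandering. The points you flag as needing care (the direction $g^{-1}(A)\subseteq I$ under the left-to-right convention, and that both arcs are proper with dyadic endpoints) are exactly the details the paper's proof leaves implicit.
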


\begin{proof}
By Lemmas \ref{lem1} and \ref{lem2} there is an open weakly $\gamma$-wandering set $U\subseteq S^1$. We can assume that $U=(a,b)$ is an open subinterval of $S^1$ with dyadic endpoints. Let $(c,d)$ be an open subinterval of $S^1$ with dyadic endpoints such that $(c,d)\supseteq A$. It follows from Remark \ref{transitive} that there is an element $g\in T$ such that $g(a,b)=(c,d)\supseteq A$. Since $g(a,b)$ is weakly $\gamma^g$-wandering, the set $A$ is also weakly $\gamma^g$-wandering. 
\end{proof}

\begin{corollary}\label{cor2}
Let $\gamma\in T$ and let $A\subset S^1$ be a closed subset of $S^1$ such that $A\neq S^1$. Then there exists $g\in V$ such that $A$ is $\gamma^g$-wandering. 
\end{corollary}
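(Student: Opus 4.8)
The plan is to follow the proof of Corollary \ref{cor1} essentially verbatim, replacing ``weakly $\gamma$-wandering'' by ``$\gamma$-wandering'' throughout; the only input that changes is the source of the wandering set. First I would produce an open $\gamma$-wandering set $U\subseteq S^1$. If $\gamma$ is non-periodic this is exactly Lemma \ref{lem1}. If $\gamma$ is periodic, then since $\gamma\in T$ the second assertion of Lemma \ref{lem2} supplies an open $\gamma$-wandering set. This is precisely the place where the hypothesis $\gamma\in T$ (rather than merely $\gamma\in V$) is used: for a general periodic element of $V$ one only obtains a weakly wandering set, whereas the $T$-clause of Lemma \ref{lem2} upgrades this to a genuine wandering set. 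Thus in all cases an element $\gamma\in T$ admits an open $\gamma$-wandering set.

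Next, since any open subinterval of a $\gamma$-wandering set is again $\gamma$-wandering, I may shrink $U$ to an open subinterval $(a,b)$ with dyadic endpoints. Because $A$ is closed and $A\neq S^1$, the complement $S^1\setminus A$ is a nonempty open set; choosing a closed dyadic subarc of $S^1\setminus A$ and taking its complement yields an open subinterval $(c,d)$ of $S^1$ with dyadic endpoints such that $(c,d)\supseteq A$. By Remark \ref{transitive} there is an element $g\in T\subseteq V$ with $g(a,b)=(c,d)$.

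It remains to transport the wandering property along $g$. Since $g$ is a bijection of $S^1$ and composition is from left to right, for every $n$ one has $(\gamma^g)^n(g(a,b))=g(\gamma^n(a,b))$, whence $(\gamma^g)^n(g(a,b))\cap g(a,b)=g\bigl(\gamma^n(a,b)\cap(a,b)\bigr)$; as $g$ is injective this set is empty whenever $\gamma^n(a,b)\cap(a,b)=\emptyset$. Because $(\gamma^g)^n=(\gamma^n)^g$ is trivial exactly when $\gamma^n$ is, the $\gamma$-wandering property of $(a,b)$ transfers to the $\gamma^g$-wandering property of $g(a,b)=(c,d)$. Finally, since $A\subseteq(c,d)$ and a subset of a $\gamma^g$-wandering set is $\gamma^g$-wandering, the set $A$ is $\gamma^g$-wandering, as required.

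I expect no serious obstacle here beyond bookkeeping. The only genuine content is the strengthening from weak to strict wandering, which is handed to us for free by the $T$-clause of Lemma \ref{lem2}, together with the observation that conjugation by the bijection $g$ carries a $\gamma$-wandering set to a $\gamma^g$-wandering set. The remaining ingredients — passing to a dyadic subinterval, enclosing $A$ in a proper dyadic interval, and applying Remark \ref{transitive} — are identical to those already used for Corollary \ref{cor1}.
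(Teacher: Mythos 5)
Your proposal is correct and is exactly the adaptation the paper intends: the authors state that the proof of Corollary \ref{cor2} is almost identical to that of Corollary \ref{cor1}, with the only change being that the open $\gamma$-wandering set comes from Lemma \ref{lem1} in the non-periodic case and from the $T$-clause of Lemma \ref{lem2} in the periodic case, followed by the same conjugation argument via Remark \ref{transitive}. Your bookkeeping of how conjugation by $g$ transports the wandering property, and the enclosure of $A$ in a proper dyadic interval $(c,d)$, are both accurate.
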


Now, we are ready to prove that $T$ and $V$ are not invariably generated. We start with the proof for $T$ and then adapt it to a proof for Thompson group $V$. 

\begin{theorem}
Thompson group $T$ is not invariably generated.
\end{theorem}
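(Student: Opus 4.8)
The plan is to show that Thompson group $T$ is not invariably generated by exhibiting, for any subset $S \subseteq T$, a choice of conjugators $g(s)$ such that the conjugated elements all share a common fixed point on $S^1$, hence fail to generate all of $T$. The key structural fact I would exploit is that every element of $T$ has a conjugate that is $\gamma$-wandering on a prescribed small closed arc (Corollary \ref{cor2}), which lets me ``push'' the dynamics of each generator away from a fixed basepoint.

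Concretely, I would fix a point, say a small closed arc $A$ near $0 \in S^1$ (or a single point, taking $A$ to be a closed interval $A \neq S^1$), and argue as follows. Enumerate $S = \{s_i\}$. For each $s_i$, apply Corollary \ref{cor2} to obtain $g_i = g(s_i) \in T$ such that $A$ is $\gamma^{g_i}$-wandering for $\gamma = s_i$; here I must be slightly careful, since Corollary \ref{cor2} is stated for $\gamma \in T$, which is exactly our setting. Being $s_i^{g_i}$-wandering on the arc $A$ means that the conjugated generator moves a neighborhood of the basepoint entirely off itself under all nonzero powers. The goal is that all the conjugated elements $s_i^{g_i}$ jointly preserve some proper subset structure, so that the subgroup they generate cannot be all of $T$.

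The main obstacle — and the crux of the argument — is that being wandering for each individual generator does not automatically produce a common invariant object for the whole subgroup; wandering is a statement about each $s_i^{g_i}$ separately, not about the group they generate. The resolution I anticipate is a compactness/diagonalization argument: I would arrange the conjugators so that each $s_i^{g_i}$ fixes a common point pointwise on an arc, or more precisely so that the supports are confined to a proper arc $S^1 \setminus A$, and then invoke the fact that a subgroup of $T$ whose every generator fixes the arc $A$ pointwise cannot act transitively (by Remark \ref{transitive} the full group $T$ certainly moves $A$, giving the desired proper containment). Thus I would choose the $g_i$ so that $s_i^{g_i}$ fixes $A$ pointwise for all $i$, making $\langle s_i^{g_i} \rangle$ contained in the stabilizer of $A$ pointwise, which is a proper subgroup of $T$.

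To make this work uniformly over a possibly infinite $S$, I would nest the arcs: choose a decreasing sequence of arcs shrinking toward the basepoint and, for the $i$-th generator, use Corollary \ref{cor2} to send a wandering interval for $s_i$ onto an arc containing $A$, so that $A$ lies in the wandering region and can be arranged to be fixed. The periodic case (handled by Lemma \ref{lem2}, where for $\gamma \in T$ every weakly wandering set is genuinely wandering) and the non-periodic case (Lemma \ref{lem1}) are both covered by Corollary \ref{cor2}, so no separate casework on the order of $\gamma$ is needed. The delicate point to verify is that one can realize \emph{pointwise} fixing of $A$ rather than merely the wandering condition, but this follows because within a wandering arc one has the freedom to conjugate the element so its action there is trivial; alternatively, the wandering condition on a fixed arc already guarantees the element omits $A$ from its support after conjugation, which suffices to place every $s_i^{g_i}$ in a common proper subgroup.
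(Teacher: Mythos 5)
There is a genuine gap here, and it is fatal to your approach. Your central step is to choose conjugators $g_i$ so that every $s_i^{g_i}$ fixes a common arc $A$ pointwise, placing $\langle s_i^{g_i}\rangle$ in the pointwise stabilizer of $A$. But rotation number is invariant under conjugation by orientation-preserving homeomorphisms of $S^1$, and any element of $T$ with nonzero rotation number has no fixed point whatsoever; in particular every nontrivial torsion element of $T$ (e.g.\ the rotation $t\mapsto t+\frac12$) acts freely on $S^1$, and so does every one of its conjugates. For such conjugacy classes no choice of conjugator can make the element fix even a single point, let alone an arc. Your fallback claim --- that ``the wandering condition on a fixed arc already guarantees the element omits $A$ from its support after conjugation'' --- reverses the definition: $A$ being $\gamma$-wandering means $\gamma^n(A)\cap A=\emptyset$ whenever $\gamma^n\neq e$, i.e.\ \emph{every} point of $A$ is moved, which is the exact opposite of $A$ lying outside the support. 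Indeed, Lemma \ref{lem2} exists precisely because nontrivial periodic elements of $T$ are fixed-point-free: they admit genuinely wandering arcs (all orbits of a periodic element of $T$ have the same period), but they never pointwise fix anything. Note also that for the theorem about $T$ one needs conjugators in $T$; the proof of Corollary \ref{cor1} supplies $g\in T$ via Remark \ref{transitive}, even though the corollaries are stated with $g\in V$.

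You correctly identified the main obstacle --- wandering for each generator separately yields no common invariant object --- but the paper's resolution is ping-pong, not a common fixed set. One fixes pairwise disjoint open intervals $I_n$, one per nontrivial conjugacy class $C_n$, and uses Corollary \ref{cor2} to choose $\gamma_n\in C_n$ so that the \emph{complement} $I_n^c$ is $\gamma_n$-wandering. Then $\gamma_n^k(I_n^c)\subseteq I_n$ for every $\gamma_n^k\neq e$, hence $\gamma_n^k(I_i)\subseteq I_n$ for all $i\neq n$, and the ping-pong lemma gives $\langle \gamma_n : n\in\mathbb{N}\rangle\cong \ast_{n\in\mathbb{N}}\langle\gamma_n\rangle$. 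Since a nontrivial free product is never simple while $T$ is, this subgroup is proper even though it meets every nontrivial conjugacy class, so $T$ is not invariably generated. The moral is that the wandering machinery is designed to make each representative ``compress'' everything outside its own interval into that interval, enabling joint freeness; it cannot be converted into a common pointwise-fixed set, because torsion and, more generally, nonzero rotation number obstruct any fixed points at all.
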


\begin{proof}
Let $I_n$ be a set of pairwise disjoint open subintervals  of $S^1$. 
Let $C_n$ be the non-trivial conjugacy classes of $T$. Let $\gamma_n\in C_n$ be such that the complement $I_n^c$ is $\gamma_n$-wandering ($\gamma_n$ exists by Corollary \ref{cor2}). Then for any $k\in\mathbb{Z}$, if $\gamma_n^k\neq e$ then $\gamma_n^k(I_n^c)\subseteq I_n$. In particular, for all $i\neq n$, $\gamma_n^k(I_i)\subseteq I_n$. Then by the ping-pong lemma (see, for example, \cite[Lemma 2.1]{OS})  $\langle \gamma_n: n\in \mathbb{N}\rangle=\ast_{n\in\mathbb{N}} \langle \gamma_n\rangle$.
\end{proof}

\begin{theorem}
Thompson group $V$ is not invariably generated. 
\end{theorem}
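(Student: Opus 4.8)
The plan is to mimic the proof for $T$ as closely as possible, adapting only where the structure of $V$ forces a change. Just as in the $T$ case, I would fix a sequence of pairwise disjoint open subintervals $I_n$ of $S^1$, enumerate the non-trivial conjugacy classes $C_n$ of $V$, and for each $n$ select a representative $\gamma_n \in C_n$ for which the complement $I_n^c$ is $\gamma_n$-wandering. The existence of such $\gamma_n$ is now supplied by Corollary \ref{cor1} rather than Corollary \ref{cor2}: Corollary \ref{cor2} was stated only for $\gamma \in T$ and produced an honestly $\gamma^g$-wandering set, whereas for a general $\gamma \in V$ we only have access to Corollary \ref{cor1}, which produces a \emph{weakly} $\gamma^g$-wandering set.

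This weakening is exactly the main obstacle, and it is where the two proofs diverge. In the $T$ case the set $I_n^c$ is genuinely $\gamma_n$-wandering, so $\gamma_n^k(I_n^c)$ is disjoint from $I_n^c$ for every $k$ with $\gamma_n^k \neq e$, which immediately places all of $\gamma_n^k(I_n^c)$ inside $I_n$ and sets up the ping-pong hypothesis. For $V$ the best we can guarantee is that $I_n^c$ is weakly $\gamma_n$-wandering, meaning that for each $k$ either $\gamma_n^k$ fixes $I_n^c$ pointwise or $\gamma_n^k(I_n^c)$ is disjoint from $I_n^c$. The first alternative is the danger: a power $\gamma_n^k \neq e$ could fix $I_n^c$ pointwise while acting non-trivially only inside $I_n$, and then $\gamma_n^k(I_n^c) = I_n^c$ is certainly not contained in $I_n$, so the naive ping-pong argument breaks down.

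To repair this I would argue that the bad alternative never destroys the free product structure. If $\gamma_n^k$ fixes $I_n^c$ pointwise for some $k$ with $\gamma_n^k \neq e$, then $\gamma_n^k$ is a non-trivial element of $V$ supported inside the single interval $I_n$; in particular its support is disjoint from all the other $I_i$ (since the $I_i$ are pairwise disjoint), so $\gamma_n^k$ fixes each $I_i$ with $i \neq n$ pointwise and still maps each $I_i$ ($i \ne n$) into $I_n$ in the degenerate sense that it leaves it fixed. The clean way to record this is to verify the hypotheses of the ping-pong lemma directly: for each fixed $n$ and each word alternating among the factors $\langle \gamma_i \rangle$, track a test point lying in some $I_j$ and check that every non-trivial syllable $\gamma_n^k$ either carries it into $I_n \setminus \bigcup_{i\ne n} I_i$ or, in the pointwise-fixing case, leaves it in its current interval while a neighbouring syllable in a different factor then moves it, so that the whole word cannot represent the identity. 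I expect this bookkeeping to go through once one observes that weak wandering suffices precisely because the only failure of wandering is confined to $I_n$ itself.

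Concretely, I would state and prove a ping-pong lemma variant for weakly wandering sets, or invoke a version already adequate for this situation, concluding that $\langle \gamma_n : n \in \mathbb{N}\rangle = \ast_{n\in\mathbb{N}} \langle \gamma_n\rangle$. Since this free product contains a representative from every non-trivial conjugacy class of $V$ and no single proper subgroup can meet every conjugacy class (the free product being proper, as each $\langle \gamma_n\rangle$ can be chosen of infinite or large order), the collection $\{\gamma_n\}$ witnesses that $V$ is not invariably generated: any choice of conjugates of a purported invariably generating set can be steered into such a free product lying in a proper subgroup. The decisive and only genuinely new point, compared with the $T$ argument, is handling the \emph{weakly} wandering sets, and I expect that to be the crux of the write-up.
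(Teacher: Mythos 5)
Your reduction to Corollary \ref{cor1} and your diagnosis of where the $T$-proof breaks are both correct, but the repair you propose cannot work: the conclusion $\langle \gamma_n : n\in\mathbb{N}\rangle=\ast_{n\in\mathbb{N}}\langle\gamma_n\rangle$ is simply false in this setting, and no ping-pong variant can rescue it. If $\gamma_n^{k}\neq e$ fixes $I_n^c$ pointwise, then $\gamma_n^{k}$ is supported inside $I_n$; if the same degeneracy occurs for some $\gamma_{n'}^{k'}\neq e$ with $n'\neq n$, the two elements have disjoint supports and therefore commute, whereas nontrivial elements of distinct factors of a free product never commute. Moreover, this degeneracy is forced, not avoidable by a cleverer choice of representatives or wandering sets: take $\gamma\in V$ of order $6$ acting as a $2$-cycle on one clopen piece of the circle and as a $3$-cycle on the complementary piece, so that every point has period $2$ or $3$. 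For any conjugate $\gamma'$ and any interval $I$ such that $I^c$ is weakly $\gamma'$-wandering, $I^c$ contains a point $x$ of period $2$ or $3$; say period $2$. Then $\gamma'^2(x)=x$, so $\gamma'^2(I^c)\cap I^c\neq\emptyset$, and weak wandering forces $\gamma'^2$ (which has order $3$, hence is $\neq e$) to fix $I^c$ pointwise. Since there are infinitely many such conjugacy classes, your word-tracking scheme inevitably meets words such as $[\gamma_n^{2},\gamma_{n'}^{2}]$ that are reduced and nonempty in the free-product sense yet equal the identity in $V$; your hope that ``a neighbouring syllable in a different factor then moves it'' fails exactly here, because the neighbouring syllable acts only inside its own interval.

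The paper avoids freeness altogether, and this is the missing idea. It chooses the $I_n$ pairwise disjoint, contained in $(0,\frac{1}{4})$, and converging to $0$, and proves by induction on word length that if $g=\gamma_{i_1}^{k_1}\cdots\gamma_{i_m}^{k_m}$ is a word of \emph{minimal} length over $\{\gamma_n : n\in\mathbb{N}\}$ with $g(0)=\alpha\neq 0$, then $\alpha\in I_{i_m}$: minimality is precisely what excludes the pointwise-fixing alternative at each step, since if $\gamma_{i_{m+1}}^{k_{m+1}}$ fixed the current point $\beta\in I_{i_m}\subseteq I_{i_{m+1}}^c$, the last syllable could be deleted. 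Hence the $H$-orbit of $0$ lies in $[0,\frac{1}{4})$, and since $V$ acts transitively on the finite dyadic fractions of $S^1$, the subgroup $H=\langle\gamma_n : n\in\mathbb{N}\rangle$ is proper while meeting every nontrivial conjugacy class --- which is exactly what non-invariable generation requires. (Incidentally, your closing inference is inverted: one does not argue that ``no single proper subgroup can meet every conjugacy class''; one must exhibit a proper subgroup that does.) Your instinct to track a test point through the syllables was the right one; the correct target is the single point $0$, settling for orbit confinement plus transitivity rather than a free-product structure.
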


\begin{proof}
Let $I_n$ be a set of pairwise disjoint open intervals converging to the point $0\in S^1$. We can assume that $I_n\subseteq (0,\frac{1}{4})$ for all $n$. Let $C_n$ be the non-trivial conjugacy classes of $V$. Let $\gamma_n\in C_n$ be such that the complement $I_n^c$ is weakly $\gamma_n$-wandering ($\gamma_n$ exists by Corollary \ref{cor1}). We claim that the orbit of $0$ under the action of $H=\langle \gamma_n : n\in \mathbb{N}\rangle$ is contained in $[0,\frac{1}{4})$. Since $V$ acts transitively on the set of finite dyadic fractions in $S^1$, that would imply that $H\neq V$, as required. Thus, the following lemma completes the proof of the theorem. 

\begin{lemma}
Let $\alpha\neq 0$ be a point in the orbit of $0$ in $S^1$ under the action of $H$. Let $g\in H$ be an element of minimal word-length over the alphabet $\{ \gamma_n : n\in \mathbb{N}\}$ such that $g(0)=\alpha$. Assume that 
$$g=\gamma_{i_1}^{k_1}\cdots \gamma_{i_m}^{k_m}$$ 
where $k_1,\dots,k_m\neq 0$; for each $j=1,\dots,m-1$, ${i_{j+1}}\neq {i_j}$; and such that $|k_1|+\dots+|k_m|$ is the word length of $g$ over $\{ \gamma_n : n\in \mathbb{N}\}$. Then $\alpha\in I_{i_m}$. 
\end{lemma}

\begin{proof}
We prove the lemma by induction on $m$. If $m=1$ then $g=\gamma_{i_1}^{k_1}$. The set $I_{i_1}^c$ is weakly $\gamma_{i_1}$-wandering. Since $\gamma_{i_1}^{k_1}(0)=\alpha\neq 0$ and $0\in I_{i_1}^c$, the set $\gamma_{i_1}^{k_1}(I_{i_1}^c)$ is disjoint from $I_{i_1}^c$; i.e., $g(I_{i_1}^c)=\gamma_{i_1}^{k_1}(I_{i_1}^c)\subseteq I_{i_1}$. Hence $\alpha\in I_{i_1}$. Assume that the lemma holds for $m$ and let 
$$g=\gamma_{i_1}^{k_1}\cdots \gamma_{i_m}^{k_m}\gamma_{i_{m+1}}^{k_{m+1}}.$$
We let $g_1=\gamma_{i_1}^{k_1}\cdots \gamma_{i_m}^{k_m}$ and $\beta=g_1(0)$. By the induction hypothesis, $\beta\in I_{i_m}$. Since $i_m\neq i_{m+1}$, $\beta\in I_{i_m}\subseteq I_{i_{m+1}}^c$. By the assumption of minimality of the word length of $g$, $\alpha=g(0)=\gamma_{i_{m+1}}^{k_{m+1}}(\beta)\neq \beta$. Hence, $\gamma_{i_{m+1}}^{k_{m+1}}$ does not fix pointwise the interval $I_{i_{m+1}}^c$. Since $I_{i_{m+1}}^c$ is weakly $\gamma_{i_{m+1}}$-wandering, that implies that $\gamma_{i_{m+1}}^{k_{m+1}}(I_{i_{m+1}}^c)\subseteq I_{i_{m+1}}$. Hence $\alpha\in I_{i_{m+1}}$ as required. 
\end{proof}

\end{proof}

\end{document}